\documentclass{article}
\usepackage[utf8]{inputenc}
\usepackage[]{graphicx}
\usepackage{amsthm,amsmath,amssymb,tikz,tikz-cd,amsmath,mathrsfs,mathtools,multicol,dirtytalk,tabularx,xy,lipsum,url,enumitem,cmll}
\usepackage{float}
\usepackage{enumitem} 

\usepackage[total={6.5in, 8.5in}]{geometry}
\usepackage{blkarray}
\usepackage{tabularray}
\usepackage{array}
\usepackage[colorlinks=true,linkcolor=blue,citecolor=blue]{hyperref}
\usepackage{fancyhdr}
\pagestyle{fancy}

\usepackage{changepage}

\numberwithin{equation}{section}

\theoremstyle{plain}
\newtheorem{theorem}{Theorem}[section]
\newtheorem{lemma}[theorem]{Lemma}
\newtheorem{proposition}[theorem]{Proposition}
\newtheorem{corollary}[theorem]{Corollary}

\theoremstyle{definition}
\newtheorem{definition}[theorem]{Definition}
\newtheorem{remark}[theorem]{Remark}
\newtheorem{example}[theorem]{Example}

\DeclareMathOperator*{\T}{{\large\textbf{T}}}

\title{\textbf{Direct and ordinal products realized by triangular norm operators with no zero divisors}}
\author{Joseph McDonald\footnote{Email: jsmcdon1@ualberta.ca}\footnote{University of Alberta, Department of Philosophy, Edmonton, T6G 2E7, Canada}} 
\date{}
\begin{document}
\maketitle

\fancyhead{dfg}
\fancyhead[L]{Direct and ordinal products}
\fancyhead[C]{}
\fancyhead[R]{Joseph McDonald}

\begin{abstract}
In this note we continue the work of Chon, as well as Mezzomo, Bedregal, and Santiago, by studying algebraic operations on fuzzy posets and bounded fuzzy lattices. We first prove that fuzzy posets are closed under finite direct products whenever the triangular norm realizing the product construction has no zero divisors. This result is then extended to the case of bounded fuzzy lattices. Some immediate consequences are then obtained within the setting of direct products realized by triangular norms with no nilpotent elements as well as strictly monotone and cancellative triangular norms. We then introduce a triangular norm based construction of ordinal products and similarly show that fuzzy posets are closed under ordinal products whenever the triangular norm realizing the product construction has no zero divisors.             
\par
\vspace{.2cm}
\noindent \textbf{Keywords:} Fuzzy poset; Bounded fuzzy lattice; Direct product; Ordinal product; Triangular norm; Zero divisor. 
  \end{abstract}

\maketitle

\section{Introduction}

The general aim of this study is to investigate certain generalizations of various concepts within the areas of universal and abstract algebra within the context of fuzzy set theory. Research along these general lines was initiated by Rosenfeld \cite{ros} who studied fuzzy groups, Liu \cite{liu} who studied fuzzy rings, and Kuroki \cite{kur} who studied fuzzy semigroups. Yuan and Wu \cite{yua} as well as Ajmal and Thomas \cite{ajm} introduced fuzzy lattices as algebras arising from certain lattice-valued fuzzy sets, i.e., fuzzy sets that have a lattice as the codomain of their associated membership/characteristic functions. Chon \cite{cho2009} alternatively defined fuzzy lattices as arising from fuzzy partially ordered sets and proved that fuzzy lattices are closed under direct minimum products in which the fuzzy partial order structure of the product construction is realized by the minimum triangular norm operator defined over the fuzzy partial order structure of each factor. Chon's findings generalize the well-known result within classical lattice theory which demonstrates that lattices are closed under direct products whereby the partial order structure of the product construction is defined coordinate-wise with respect to the partial order structure of each factor. Mezzomo, Bedregal, and Santiago \cite{mez} extended Chon's formulation of a fuzzy lattice in \cite{cho2009} to that of a bounded fuzzy lattice and proved that bounded fuzzy lattices are closed under taking direct minimum products, collapsed sums, liftings, and many other algebraic/order-theoretic constructions.

In this note we continue the work of Chon, as well as Mezzomo, Bedregal, and Santiago, by studying direct products of bounded fuzzy lattices. The first contribution of this study is to extend their result by showing that bounded fuzzy lattices are closed under a much more general construction of direct products; namely direct products that are defined using a triangular norm operator with no zero divisors.\footnote{We however work with a generalized formulation of transitivity on the underling fuzzy poset.} Some immediate consequences of this result are then obtained within the setting of direct products realized by triangular norms with no nilpotent elements as well as strictly monotone and cancellative triangular norms. We then introduce an appropriate generalization of ordinal products of posets to the setting of fuzzy posets and similarly show that this generalization mimicks the closure conditions of posets in the classical setting provided the construction of the ordinal product is realized by a triangular norm operator with no zero divisors. The results obtained within this paper suggest that triangular norm operators with no zero divisors play an important role within the algebraic analysis of fuzzy posets and lattices.     

  The contents of this article are organized in the following manner: Section 2 establishes some preliminaries of bounded fuzzy lattices. In section 3, we exposit the basics of triangular norm operators and characterize the general triangular norm based direct product construction on bounded fuzzy lattices. In section 4, we prove that fuzzy posets are closed under direct products whenever the triangular norm realizing the product construction has no zero divisors and then extend this result to the case of bounded fuzzy lattices. Some immediate consequences of this result are then established. In section 5, we introduce a triangular norm based construction of ordinal products and investigate the basic properties of this construction.

\section{Preliminaries}

\begin{definition}\label{2.1}
    Let $X$ be a set, let $[0,1]=\{a\in\mathbb{R}:0\leq a\leq 1\}$, and let $\mu\colon X\times X\to[0,1]$ be a fuzzy relation. Then for all $x,y,z\in X$, we define the following relational properties:
    \begin{enumerate}
        \item $\mu$ is \emph{reflexive} if $\mu(x,x)=1$; \label{2.1.1}
        \item $\mu$ is \emph{transitive} if $\mu(x,y)>0$ and $\mu(y,z)>0$ implies $\mu(x,z)>0$; \label{2.1.2}\footnote{This formulation of fuzzy transitivity of $\mu$ is known as \emph{preference sensitive transitivity} and was studied by Barrett \emph{et. al.} in \cite{bar}. This may be contrasted to the more standard sup-min composition formulation of fuzzy transitivity characterized by the inequality $\mu(x,z)\geq\sup_{y\in X}\min(\mu(x,y),\mu(y,z))$ introduced by Zadeh in \cite{zad1}. Although Chon \cite{cho2009} and Mezzomo \emph{et al.} \cite{mez} use the latter, we use the former as it is a more natural generalization of the transitivity condition of crisp posets. For a survey of various formulations of fuzzy transitivity, we refer the reader to Dasgupta and Deb in \cite{das} for more details. 
        
        We note that although the results described in this section were originally proven for fuzzy posets defined using sup-min transitivity, they remain true in the setting of fuzzy posets defined using preference-sensitive transitivity.}
        \item $\mu$ is \emph{anti-symmetric} if $\mu(x,y)>0$ and $\mu(y,x)>0$ implies $x=y$; \label{2.1.3}
        \item $\mu$ is a \emph{fuzzy partial ordering} if $\mu$ is reflexive, transitive, and anti-symmetric.\label{2.1.4}   
    \end{enumerate}
\end{definition}

For any set $X$ and any fuzzy relation $\mu\colon X\times X\to[0,1]$, we call $\mathbb{X}=\langle X;\mu\rangle$ a \emph{fuzzy relational frame}. We call a fuzzy relational frame $\mathbb{X}=\langle X;\mu\rangle$ a \emph{fuzzy partially ordered set} (or simply, a \emph{fuzzy poset}) whenever $\mu$ is a fuzzy partial ordering on $X$. Moreover, if either $\mu(x,y)>0$ or $\mu(y,x)>0$ for all $x,y\in X$, we call $\mu$ a \emph{fuzzy linear order} on $X$ and $\mathbb{X}=\langle X;\mu\rangle$ a \emph{linear fuzzy poset}. 

\begin{definition}\label{2.2}
    Let $\mathbb{X}=\langle X;\mu\rangle$ be a fuzzy poset and let $Y\subseteq X$. Then: 
    \begin{enumerate}
        \item an element $x\in X$ is a \emph{fuzzy lower bound} of $Y$ if $\mu(x,y)>0$ for all $y\in Y$;
        \item an element $x\in X$ is a \emph{fuzzy upper bound} of $Y$ if $\mu(y,x)>0$ for all $y\in Y$;
        \item a fuzzy lower bound $x_0\in X$ of $Y$ is the \emph{greatest fuzzy lower bound} (or \emph{fuzzy meet}) of $Y$ if $\mu(x,x_0)>0$ for every fuzzy lower bound $x\in X$ of $Y$;
        \item a fuzzy upper bound $x_0\in X$ of $Y$ is the \emph{least fuzzy upper bound} (or \emph{fuzzy join}) of $Y$ if $\mu(x_0,x)>0$ for every fuzzy upper bound $x\in X$ of $Y$. 
    \end{enumerate}
\end{definition}  
Throughout the remainder of this work, given a fuzzy poset $\mathbb{X}=\langle X;\mu\rangle$, we denote the operations of fuzzy meet and fuzzy join for any $x,y\in X$ by $x\odot y$ and $x\oplus y$, respectively. Notice that $x\odot y$ and $x\oplus y$ are uniquely determined (whenever they exist) by the anti-symmetry of $\mu$.

\begin{definition}\label{2.3}
Let $\mathbb{X}=\langle X;\mu\rangle$ be a fuzzy poset. Then $\mathbb{X}$ is a \emph{fuzzy lattice} whenever $x\odot y$ and $x\oplus y$ exist for all $x,y\in X$. Moreover,  $\mathbb{X}$ is \emph{bounded} if there exist elements $w,z\in X$ such that $\mu(w,x)>0$ and $\mu(x,z)>0$ for all $x\in X$. In this case we call $w$ and $z$ the \emph{fuzzy bottom} and \emph{fuzzy top} elements of $\mathbb{X}$ respectively and occasionally will write $0_X:=w$ and $1_X:=z$.     
\end{definition}

\begin{example}\label{2.4}
    Table \ref{table1} depicts the corresponding matrices of two finite bounded fuzzy lattices.\footnote{Direct products of these specific lattices are given in Tables \ref{table2} and \ref{table3} under various triangular norm based realizations.} 
    \end{example}

\begin{table}[htbp]
\fontsize{4pt}{10pt}\selectfont
\centering
\begin{tabular}{|c|c|c|c|c|}
\hline
$\mu_{X_1}$ & $w_1$ & $x_1$ & $y_1$ & $z_1$ \\
 \hline
 $w_1$ & 1 & 0.1 & 0.4 & 0.8\\
  \hline
  $x_1$ & 0 & 1 & 0.2 & 0.5\\
   \hline
   $y_1$ & 0 & 0 & 1 & 0.3\\
    \hline
    $z_1$ & 0 & 0 & 0 & 1\\
    \hline
\end{tabular}
\hskip 5em
\begin{tabular}{|c|c|c|c|c|}
\hline
$\mu_{X_2}$ & $w_2$ & $x_2$ & $y_2$ & $z_2$ \\
 \hline
 $w_2$ & 1 & 0.1 & 0.3 & 0.9\\
  \hline
  $x_2$ & 0 & 1 & 0 & 0.6\\
   \hline
   $y_2$ & 0 & 0 & 1 & 0.4\\
    \hline
    $z_2$ & 0 & 0 & 0 & 1 \\
    \hline
\end{tabular}\caption{Bounded fuzzy lattices $\mathbb{X}_1=\langle X_1;\mu_{X_1}\rangle$ and $\mathbb{X}_2=\langle X_2,\mu_{X_2}\rangle$}\label{table1}
\end{table}
Note that $\mathbb{X}_1$ forms a linear fuzzy poset whereas $\mathbb{X}_2$ forms a non-linear fuzzy poset since: \[\mu_{X_2}(x_2,y_2)=\mu_{X_2}(y_2,x_2)=0\] 

\begin{proposition}[\cite{cho2009}]\label{prop2.5}
    Let $\mathbb{X}=\langle X;\mu\rangle$ be a fuzzy lattice. Then for all $x,y\in X$ we have:
    \begin{enumerate}
    \item $\mu(x,x\oplus y)>0$ and $\mu(y,x\oplus y)>0$ and $\mu(x\odot y,x)>0$ and $\mu(x\odot y,y)>0$
    \item $\mu(x,z)>0$ and $\mu(y,z)>0$ implies $\mu(x\oplus y,z)>0$
    \item $\mu(z,x)>0$ and $\mu(z,y)>0$ implies $\mu(z,x\odot y)>0$
    \item $\mu(x,y)>0$ if and only if $x\oplus y=y$\label{2.5(2)} 
    \item $\mu(x,y)>0$ if and only if $x\odot y=x$\label{2.5(1)}
         \item $\mu(y,z)>0$ implies $\mu(x\odot y,x\odot z)>0$ and $\mu(x\oplus y,x\oplus z)>0$
    \end{enumerate}
\end{proposition}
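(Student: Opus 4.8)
The plan is to read off all six items directly from the definitions of fuzzy lower/upper bound and fuzzy meet/join in Definition~\ref{2.2}, using nothing beyond reflexivity, transitivity, and anti-symmetry of $\mu$ together with the defining universal properties of $\odot$ and $\oplus$. Since $\langle X;\mu^{\mathrm{op}}\rangle$ with $\mu^{\mathrm{op}}(x,y):=\mu(y,x)$ is again a fuzzy lattice, in which fuzzy meets and fuzzy joins are interchanged with those of $\mathbb{X}$, it suffices to prove the meet-side assertions of items~(1), (3), (5), (6); the join-side assertions and items~(2), (4) then follow by this order-duality.

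Item~(1) is immediate: $x\odot y$ is by definition a fuzzy lower bound of $\{x,y\}$, so $\mu(x\odot y,x)>0$ and $\mu(x\odot y,y)>0$. For item~(3), if $\mu(z,x)>0$ and $\mu(z,y)>0$ then $z$ is a fuzzy lower bound of $\{x,y\}$, and since $x\odot y$ is the \emph{greatest} fuzzy lower bound, the definition of fuzzy meet gives $\mu(z,x\odot y)>0$. For item~(6), assume $\mu(y,z)>0$; by item~(1) we have $\mu(x\odot y,x)>0$ and $\mu(x\odot y,y)>0$, and transitivity applied to $\mu(x\odot y,y)>0$ and $\mu(y,z)>0$ yields $\mu(x\odot y,z)>0$, so $x\odot y$ is a fuzzy lower bound of $\{x,z\}$ and hence $\mu(x\odot y,x\odot z)>0$ by the universal property of $x\odot z$.

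The only item needing a moment's care is item~(5). For the forward direction, from $\mu(x,y)>0$ and $\mu(x,x)=1>0$ (reflexivity) we see that $x$ is a fuzzy lower bound of $\{x,y\}$, so $\mu(x,x\odot y)>0$ by the universal property of $x\odot y$; combined with $\mu(x\odot y,x)>0$ from item~(1), anti-symmetry forces $x\odot y=x$. The converse is a direct consequence of item~(1): if $x\odot y=x$ then $\mu(x\odot y,y)>0$ is precisely $\mu(x,y)>0$.

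I do not anticipate any genuine obstacle: each step is a single appeal to a definition or to one of the three relational axioms. The two things to keep track of are (i) feeding reflexivity into the argument for item~(5) so that an element qualifies as a bound of a two-element set containing it, and (ii) invoking anti-symmetry, rather than transitivity, at the step where an inequality holding in both directions is upgraded to an equality.
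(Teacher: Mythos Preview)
Your proposal is correct and follows essentially the same approach as the paper: both arguments unpack items~(1)--(3) directly from the definitions of fuzzy meet/join, obtain (4)--(5) via reflexivity plus anti-symmetry, and derive (6) from (1), (3) and transitivity. The only presentational differences are that you invoke order-duality explicitly (the paper just says the dual cases ``run analogously''), you give a direct argument for item~(6) where the paper phrases it as a proof by contradiction, and you spell out the converse implications in (4)--(5), which the paper leaves implicit.
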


  From an algebraic perspective, Proposition \ref{2.6} together with Proposition \ref{2.7} show that Definition \ref{2.3} provides a suitable formulation of bounded fuzzy lattices in the sense that whenever $\mathbb{X}$ is a bounded fuzzy lattice, $\langle \mathbb{X};\odot,1_X\rangle$ and $\langle\mathbb{X};\oplus,0_X\rangle$ form idempotent commutative monoids connected by the absorption identities.    

\begin{proposition}[\cite{cho2009}]\label{2.6}
    For any bounded fuzzy lattice $\mathbb{X}=\langle X;\mu\rangle$ and any $x,y,z\in X$, we have:
        \begin{enumerate}
              \item\label{e2.61} $x\odot x=x$,\hspace{.1cm} $x\oplus x=x$ 
        \item\label{2.62} $x\odot y=y\odot x$,\hspace{.1cm} $x\oplus y=y\oplus x$ 
              \item\label{e2.63} $x=x\odot(x\oplus y)$,\hspace{.1cm} $x=x\oplus(x\odot y)$ 
        \item\label{e2.64} $x\odot(y\odot z)=(x\odot y)\odot z$,\hspace{.1cm} $x\oplus(y\oplus z)=(x\oplus y)\oplus z$ 
    \end{enumerate}  
\end{proposition}
The following proposition was not stated in \cite{cho2009, mez} but follows immediately from the definition of the bounds $0_X$ and $1_X$ as well as conditions 4 and 5 of Definition \ref{2.6}. 
\begin{proposition}\label{2.7}
    For any bounded fuzzy lattice $\mathbb{X}=\langle X;\mu\rangle$, $0_X$ is a unit with respect to $\odot$ and $1_X$ is a unit with respect to $\oplus$, i.e., $x\odot 1_X=x$ and $x\oplus 0_X=x$ for all $x\in X$
\end{proposition}

Given bounded fuzzy lattices $\mathbb{X}_1=\langle X_1;\mu_{X_1}\rangle$ and $\mathbb{X}_2=\langle X_2;\mu_{X_2}\rangle$, a function $\phi\colon\mathbb{X}_1\to\mathbb{X}_2$ is \emph{monotone} whenever $\mu_{X_1}(x_1,y_1)>0$ implies $\mu_{X_2}(\phi(x_1),\phi(y_1))>0$, and a \emph{bounded homomorphism} whenever $\phi$ preserves the lattice operations from $\mathbb{X}_1$ to $\mathbb{X}_2$. Lastly, $\phi$ is an \emph{isomorphism} if $\phi$ is a bijective bounded homomorphism.

    Clearly by Proposition \ref{prop2.5}, every bounded homomorphism between bounded fuzzy lattices is a monotone function. We note that the class of bounded fuzzy lattices along with the class of their associated bounded homomorphisms can be easily seen to form a category, which we will denote by $\mathbf{BFL}$ (consult Mac Lane \cite{mac} for more details on basic category theory). More generally, one can easily see that the class of fuzzy posets and their associated monotone functions form a category, which we shall denote by $\mathbf{FP}$.

\section{Direct minimum products}
Recall that if $\mathbb{X}_1=\langle X_1;\leq\rangle,\dots,\mathbb{X}_n=\langle X_n;\preceq\rangle$ are bounded lattices, then their \emph{direct product} is a relational structure $\prod^n_{i=1}\mathbb{X}_i=\langle \prod^n_{i=1}X_i;\leq\rangle$ such that $\prod^n_{i=1}X_i=\{(x_1,\dots,x_n):x_i\in X_i\hspace{.2cm}\text{for}\hspace{.2cm}1\leq i\leq n\}$ is the $n$-ary Cartesian product and $\leq$ is a partial ordering defined coordinate-wise by $(x_1,\dots,x_n)\leq(y_1,\dots,y_n)$ if and only if $x_i\leq_{X_i}y_i$ for $1\leq i\leq n$. Moreover, meets, joins, the bottom element, and the top element are defined coordinate-wise. It is well-known that bounded lattices are closed under the formation of direct products. 

Generalizing direct products to the setting of bounded fuzzy lattices relies on the class of triangular norm operators. Consult Klement \emph{et al.} \cite{kle} for more details on the basic theory of triangular norms.   

\begin{definition}\label{3.1}
    A \emph{triangular norm} is an operator of type $\tau\colon[0,1]^2\to[0,1]$ satisfying:
    \begin{multicols}{2}
    \begin{enumerate}\label{3.11}
        \item\label{3.1(1)} $\tau(a,\tau(b,c))=\tau(\tau(a, b), c)$
        \item\label{3.1(2)} $b\leq c\Rightarrow \tau(a,b)\leq \tau(a,c)$ 
        \item\label{3.1(3)} $\tau(a,b)=\tau(b,a)$ 
        \item\label{3.1(4)} $\tau(a,1)=a$
    \end{enumerate}
    \end{multicols}
\end{definition}
 The following is well-known and shows that every triangular norm coincides on the boundary of the unit square. 
\begin{proposition}[\cite{kle}]\label{p3.2}
    Every triangular norm $\tau\colon[0,1]^2\to[0,1]$ satisfies: 
    \begin{enumerate}
        \item $\tau(1,a)=a$
        \item $\tau(a,0)=\tau(0,a)=0$
    \end{enumerate}
\end{proposition}

Notice that by Definition \ref{3.1}(1), every triangular norm $\tau\colon[0,1]^2\to[0,1]$ can be uniquely extended to an $n$-ary operation on each $n$-tuple $(a_1,\dots,a_n)\in[0,1]^n$ by induction in the expected way:
\[\T^n_{i=1}a_i=\tau\biggl(\T^{n-1}_{i=1}a_i,a_n\biggl)\]

\begin{definition}\label{4.6}
    Let $\tau\colon [0,1]^2\to[0,1]$ be a triangular norm and let $(0,1)=\{r\in\mathbb{R}:0<r<1\}$. Then $a\in(0,1)$ is said to be a \emph{zero-divisor} for $\tau$ if there exists some $b\in(0,1)$ such that $\tau(a,b)=0$. In this case, we say that $\tau$ \emph{has zero divisors}.   
\end{definition}
A straightforward induction on $n$ shows that if $\tau$ is a triangular norm with zero divisors (respectively, no zero divisors), then every $n$-ary extension of $\tau$ has zero divisors (respectively, no zero divisors).  

\begin{example}\label{4.7}
   Lukasiewicz triangular norms obviously have zero divisors since for any $a,b\in(0,1)$ such that $a+b\leq 1$, we clearly have $\max\{a+b-1,0\}=0$. However the minimum, algebraic product, and Hamacher triangular norms have no zero divisors since for any $a,b\in(0,1)$, $\min\{a,b\}$, $a\cdot b$, $\frac{a\cdot b}{a+b-a\cdot b}>0$.  
\end{example}

\begin{definition}\label{3.3}
    Let $\mathbb{X}_1=\langle X_1;\mu_{X_1}\rangle,\dots,\mathbb{X}_n=\langle X_n;\mu_{X_n}\rangle$ be a finite family of bounded fuzzy lattices. Their \emph{direct product} is a fuzzy relational frame $\prod^n_{i=1}\mathbb{X}_i=\big\langle \prod^n_{i=1}X_i;\mu_p\big\rangle$ such that:
    \begin{enumerate}
        \item $\prod^n_{i=1}X_i$ is the $n$-ary Cartesian product;
        \item $\mu_p$ is a fuzzy relation of type $\mu_p\colon \prod^n_{i=1}X_i\times\prod^n_{i=1}X_i\to[0,1]$ defined by:
        \par
        \vspace{-.2cm}
        \[
        \mu_p((x_1,\dots,x_n),(y_1,\dots,y_n))=\T^n_{i=1}\mu_{X_i}(x_i,y_i)
      \]
 for some triangular norm $\tau$ where:
\[\T^n_{i=1}\mu_{X_i}(x_i,y_i)=\tau\biggl(\T^{n-1}_{i=1}\mu_{X_i}(x_i,y_i),\mu_{X_n}(x_n,y_n)\biggl)\]
        In this case, we say that the $n$-ary extension of $\tau$ \emph{realizes} the construction of $\prod^n_{i=1}\mathbb{X}_i$;  
        \item the fuzzy meet of any $\{(x_1,\dots,x_n),(y_1,\dots,y_n)\}\subseteq\prod^n_{i=1}X_i$ is defined coordinate-wise by:
        \[(x_1,\dots,x_n)\odot(y_1,\dots,y_n)=(x_1\odot y_1,\dots,x_n\odot y_n);\]
        \item the fuzzy join of any $\{(x_1,\dots,x_n),(y_1,\dots,y_n)\}\subseteq\prod^n_{i=1}X_i$ is defined coordinate-wise by:
        \[(x_1,\dots,x_n)\oplus(y_1,\dots,y_n)=(x_1\oplus y_1,\dots,x_n\oplus y_n);\]
        \item the fuzzy bottom and top elements of $\prod^n_{i=1}X_i$ are given by $(0_{X_1},\dots,0_{X_n})$ and $(1_{X_1},\dots,1_{X_n})$. 
    \end{enumerate}
\end{definition}

Notice that Definition \ref{3.3} is quite general in the sense that the fuzzy relational structure obtained by calculating the direct product is contingent upon the particular triangular norm one chooses as its realization. 

Chon \cite{cho2009} as well as Mezzomo, Bedregal, and Santiago \cite{mez} studied direct products realized by the minimum triangular norm, i.e., direct products whose fuzzy relational structure is calculated by:
\[\mu_p\colon\prod^n_{i=1}X_i\times\prod^n_{i=1}X_i\to[0,1];\hspace{.3cm}\mu_p((x_1,\dots,x_n),(y_1,\dots,y_n))=\min(\mu_{X_1}(x_1,y_1),\dots,\mu_{X_n}(x_n,y_n)),\] which we will refer to as the \emph{direct minimum product} of $\mathbb{X}_1,\dots,\mathbb{X}_n$. They obtained the following result.

\begin{theorem}[Chon \cite{cho2009}\label{3.5}, Mezzomo \emph{et al.} \cite{mez}]
    Given bounded fuzzy lattices $\mathbb{X}_1,\dots,\mathbb{X}_n$, their direct minimum product $\prod^n_{i=1}\mathbb{X}_i$ is a bounded fuzzy lattice. 
\end{theorem}

The proceeding example is a concrete instance of Theorem \ref{3.5} in the case when $n=2$. 
\begin{example}
Table \ref{table2} depicts the direct minimum product of the lattices $\mathbb{X}_1$ and $\mathbb{X}_2$ in Table \ref{table1}.  
\end{example}
\begin{table}[htbp]
\fontsize{4pt}{10pt}\selectfont
\begin{tabular}{|c|c|c|c|c|c|c|c|c|c|c|c|c|c|c|c|c|}
\hline
$\mu_p$ & $w_1w_2$ & $w_1x_2$ & $w_1y_2$ & $w_1z_2$ & $x_1w_2$ & $x_1x_2$ & $x_1y_2$ & $x_1z_2$ & $y_1w_2$ & $y_1x_2$ & $y_1y_2$ & $y_1z_2$ & $z_1w_2$ & $z_1x_2$ & $z_1y_2$ & $z_1z_2$ \\ \hline
$w_1w_2$ & 1     &   0.1    &   0.3     &   0.9     &   0.1     &   0.1     &   0.1     &  0.1      &    0.4    &  0.1      & 0.3       &     0.4   &   0.8     &   0.1      &  0.3      &  0.8      \\ \hline
$w_1x_2$ & 0  &     1   &    0    &   0.6     &   0     &   0.1     &    0    &    0.1    &     0   &   0.4     &    0    &    0.4    &   0     &     0.8    &    0    &     0.6   \\ \hline
$w_1y_2$ &  0  &      0  &   1     &   0.4     &    0    & 0 & 0.1       &   0.1     &     0   &    0    &   0.4     &   0.4     &    0    &     0    &    0.8    &    0.4    \\ \hline
$w_1z_2$ &  0  &   0     &     0   &   1     &   0     &   0     &   0     &   0.1     &    0    &   0     &    0    &    0.4    &     0   &    0     &    0    &   0.8     \\ \hline
$x_1w_2$ &  0  &    0    &     0   &   0    &   1     &    0.1    &    0.3    &     0.9   &   0.2     &    0.1    &   0.2     &   0.2     &   0.5     &     0.1    &   0.3     &    0.5    \\ \hline
$x_1x_2$ &  0  &     0   &     0   &   0     &   0     &    1    &    0    &     0.6   &   0     &    0.2    &   0     &   0.2     &    0    &   0.5      &      0  &     0.5   \\ \hline
$x_1y_2$ &  0  &   0     &     0   &   0    &    0    &     0   &    1    &    0.4    &  0      &    0    &   0.2     &   0.2     &    0    &     0    &     0.5   &   0.4     \\ \hline
$x_1z_2$ &  0  &    0    &     0   &   0     &   0     &    0    &   0     &   1     &     0   &   0     &     0   &   0.2     &   0     &      0   &  0      &   0.5     \\ \hline
$y_1w_2$ &  0  &     0   &     0   &   0    &    0    &     0   &    0    &    0    &  1      &   0.1     &    0.3    &   0.9     &     0.3   &    0.1     &  0.3      &    0.3    \\ \hline
$y_1x_2$ &  0  &      0  &     0   &   0    &    0    &     0   &    0    &    0    &   0     &   1     &   0     &   0.6     &     0   &    0.3     &     0   &    0.3    \\ \hline
$y_1y_2$ &  0  &       0 &     0   &   0    &    0    &     0   &    0    &    0    &   0     &   0    &   1     &  0.4      &     0   &    0     &      0.3  &     0.3   \\ \hline
$y_1z_2$ &  0  &        0 &    0    &  0     &   0     &    0    &   0     &   0     &  0      &  0      &     0   &  1      &      0  &     0    &   0     &    0.3    \\\hline
$z_1w_2$ &  0  &        0 &    0    &  0     &   0     &    0    &   0     &   0     &  0      &   0     &     0   &   0     &  1      &    0.1     &    0.3    &    0.9    \\\hline
$z_1x_2$ &  0  &        0 &    0    &  0     &   0     &    0    &   0     &   0    &   0     &     0   &      0  &     0   &     0   &  1       &     0   &   0.6     \\ \hline
$z_1y_2$ &  0  &        0 &    0    &  0     &   0     &    0    &   0     &   0     &  0      &     0   &     0   &     0   &   0     &   0      &   1     &    0.4    \\ \hline
$z_1z_2$ &  0  &        0 &    0    &  0     &   0     &    0    &   0     &   0     &  0      &     0   &    0    &     0   &  0      &    0     &   0     &  1      \\ \hline
\end{tabular}\caption{The direct minimum product $\mathbb{X}_1\times\mathbb{X}_2$}\label{table2}
\end{table}

\section{Direct products realized by \emph{t}-norms with no zero divisors}
We proceed by generalizing Theorem \ref{3.5} to the case of direct products realized by a much more general class of triangular norms.

The following trivial observation will be exploited later on\begin{proposition}\label{2.8}
    The fuzzy relational frame $\mathbb{X}=\langle X;\mu\rangle$ such that $X=\{x\}$ where $\mu$ is defined by $\mu(x,x)=1$ is a bounded fuzzy lattice. 
\end{proposition}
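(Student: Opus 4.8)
The plan is to verify directly that the single-element frame satisfies each clause of Definitions \ref{2.1}, \ref{2.2}, and \ref{2.3}. First I would check that $\mu$ is a fuzzy partial ordering. Reflexivity is immediate from the stipulation $\mu(x,x)=1$. For transitivity, the only instance of the hypothesis is $\mu(x,x)>0$ together with $\mu(x,x)>0$, and the required conclusion $\mu(x,x)>0$ holds since $\mu(x,x)=1$. Anti-symmetry holds trivially: $\mu(x,x)>0$ and $\mu(x,x)>0$ forces the two arguments, both equal to $x$, to coincide. Hence $\mathbb{X}$ is a fuzzy poset.

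Next I would show that $\mathbb{X}$ is a fuzzy lattice by exhibiting $x\odot x$ and $x\oplus x$. Since $X=\{x\}$, the only pair to consider is $(x,x)$, and the relevant subset is $Y=\{x\}$. The element $x$ is a fuzzy lower bound of $Y$ because $\mu(x,x)=1>0$, and it is the greatest such bound: any fuzzy lower bound of $Y$ lies in $X$ and is therefore equal to $x$, with $\mu(x,x)=1>0$. Thus $x\odot x=x$, and the order-dual argument gives $x\oplus x=x$; by the anti-symmetry of $\mu$ these values are unique.

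Finally, for boundedness I would take the witnesses $0_X:=x$ and $1_X:=x$. For every $x'\in X$ we have $x'=x$, so $\mu(0_X,x')=\mu(x,x)=1>0$ and $\mu(x',1_X)=\mu(x,x)=1>0$, which is exactly what Definition \ref{2.3}(2) requires.

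There is no genuine obstacle here: the statement is a degenerate base case whose only subtlety is instantiating each quantifier in the definitions correctly when the underlying set is a singleton. Its purpose is to supply a terminal-type object for the product constructions developed later, so the argument can be kept to a few lines.
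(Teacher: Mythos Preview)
Your proposal is correct: the paper itself offers no proof beyond declaring the observation ``obvious,'' and your direct verification of Definitions~\ref{2.1}--\ref{2.3} is exactly the routine check that justifies this. There is nothing to compare; your argument simply fills in what the paper leaves implicit.
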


\begin{remark}\label{2.9}
  It is important to notice that the one-element bounded fuzzy lattice $\mathbb{X}$ described in Proposition \ref{2.8} is (up to isomorphism) the unique bounded fuzzy lattice satisfying the universal mapping property (in the sense of category theory) that for every bounded fuzzy lattice $\mathbb{Y}$, there exists a unique bounded homomorphism $\phi\colon\mathbb{Y}\to\mathbb{X}$. In other words, $\mathbb{X}$ is a \emph{terminal object} in the category \textbf{BFL}. More generally, since $\mathbb{X}$ is a fuzzy poset, $\mathbb{X}$ is also a terminal object in $\mathbf{FP}$.    
\end{remark}
\begin{lemma}[\cite{mac}]\label{4.1}
    A category $\mathscr{C}$ is closed under taking all finite products if and only if $\mathscr{C}$ has all binary products and a terminal object.
\end{lemma}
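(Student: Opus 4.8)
The plan is to prove both implications by unwinding the universal mapping property that defines a product of a (finite, discrete) family of objects, with the nontrivial direction handled by induction on the number of factors. For the forward direction, suppose $\mathscr{C}$ is closed under taking all finite products. A binary product is exactly the product of a two-element family, so (1) holds. A terminal object is exactly the product of the empty family: an object $T$ equipped with the empty cone such that every object $Y$ admits a unique morphism $Y\to T$ compatible with that (vacuous) cone, i.e.\ a unique morphism $Y\to T$ full stop; so (2) holds. This direction is purely a matter of matching definitions.

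For the backward direction, assume (1) and (2), and prove by induction on $n\geq 0$ that every family $X_1,\dots,X_n$ of objects of $\mathscr{C}$ has a product in $\mathscr{C}$. The case $n=0$ is the terminal object supplied by (2); the case $n=1$ is witnessed by $X_1$ together with $\mathrm{id}_{X_1}$; the case $n=2$ is hypothesis (1). For the inductive step, given $X_1,\dots,X_{n+1}$, first apply the inductive hypothesis to obtain a product $P$ of $X_1,\dots,X_n$ with projections $p_i\colon P\to X_i$, then apply (1) to form the binary product $Q=P\times X_{n+1}$ with projections $q\colon Q\to P$ and $r\colon Q\to X_{n+1}$. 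I would then claim that $Q$, together with the morphisms $\pi_i:=p_i\circ q$ for $1\leq i\leq n$ and $\pi_{n+1}:=r$, is a product of $X_1,\dots,X_{n+1}$: given any object $Y$ with morphisms $f_i\colon Y\to X_i$ for $1\leq i\leq n+1$, the universal property of $P$ gives a unique $g\colon Y\to P$ with $p_i\circ g=f_i$, and then the universal property of $Q$ gives a unique $h\colon Y\to Q$ with $q\circ h=g$ and $r\circ h=f_{n+1}$; one checks $\pi_i\circ h=f_i$ for all $i$, and uniqueness of $h$ is recovered by chasing the same two universal properties back in order.

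The only place any care is needed — and hence the main obstacle, such as it is — is the bookkeeping in the inductive step: verifying that the candidate $h$ satisfies all $n+1$ required equations and is the unique such morphism, which amounts to showing that the composite of the universal properties of $P$ and of $Q$ assembles into the single universal property of the $(n+1)$-fold product. Everything else reduces to the definitions. I would also note that since only finite products are at stake (indeed, in the applications of this lemma in the sequel, only binary products together with the nullary product of Remark~\ref{2.9}), no size or limit-existence subtleties arise, and the induction is entirely elementary.
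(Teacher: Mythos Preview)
Your proof is correct and is the standard inductive argument. The paper does not supply its own proof of this lemma but simply refers the reader to Mac Lane \cite[pg.~73, Proposition~1]{mac}, whose argument is essentially the one you have written out.
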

\begin{lemma}\label{4.2}
    If $\mathbb{X}_1=\langle X_1;\mu_{X_1}\rangle,\dots,\mathbb{X}_n=\langle X_n;\mu_{X_n}\rangle$ are fuzzy posets, then $\prod_{i=1}^n\mathbb{X}_i=\langle\prod^n_{i=1}X_i;\mu_p\rangle$ is a fuzzy poset whenever the triangular norm whose $n$-ary extension realizes its construction has no zero divisors.     
\end{lemma}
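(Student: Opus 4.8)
The plan is to isolate the one arithmetic fact about zero-divisor-free triangular norms that drives everything, verify the three axioms of a fuzzy poset (Definition \ref{2.1}) in the binary case, and then bootstrap to arbitrary finite $n$.

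The key fact is: \emph{if $\tau$ has no zero divisors, then for all $a,b\in[0,1]$ we have $\tau(a,b)>0$ if and only if $a>0$ and $b>0$}. The forward implication requires no hypothesis on $\tau$ at all: by Proposition \ref{p3.2} we have $\tau(a,0)=\tau(0,a)=0$, so $\tau(a,b)>0$ forces $a\neq 0$ and $b\neq 0$. For the converse, suppose $a,b>0$. If $a=1$ then $\tau(a,b)=b>0$ by Proposition \ref{p3.2}; if $b=1$ then $\tau(a,b)=a>0$ by the unit law $\tau(a,1)=a$ of Definition \ref{3.1}; and if $a,b\in(0,1)$ then $\tau(a,b)>0$ precisely because $\tau$ is without zero divisors (Definition \ref{4.6}) --- here one must treat the boundary values separately, since Definition \ref{4.6} only quantifies over the open interval. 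A straightforward induction on the recursion $\T^n_{i=1}a_i=\tau\bigl(\T^{n-1}_{i=1}a_i,a_n\bigr)$ from Remark \ref{3.2} then upgrades this to: $\T^n_{i=1}a_i>0$ if and only if $a_i>0$ for every $i$.

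Now take the binary case: let $\mathbb{X}_1=\langle X_1;\mu_{X_1}\rangle$ and $\mathbb{X}_2=\langle X_2;\mu_{X_2}\rangle$ be fuzzy posets and let $\tau$, without zero divisors, realize $\prod^2_{i=1}\mathbb{X}_i$. \emph{Reflexivity} is immediate, since $\mu_p((x_1,x_2),(x_1,x_2))=\tau(\mu_{X_1}(x_1,x_1),\mu_{X_2}(x_2,x_2))=\tau(1,1)=1$. For \emph{transitivity}, suppose $\mu_p((x_1,x_2),(y_1,y_2))>0$ and $\mu_p((y_1,y_2),(z_1,z_2))>0$; the key fact yields $\mu_{X_i}(x_i,y_i)>0$ and $\mu_{X_i}(y_i,z_i)>0$ for $i=1,2$, transitivity of each factor yields $\mu_{X_i}(x_i,z_i)>0$, and then the converse half of the key fact --- the unique place the zero-divisor hypothesis is actually used --- yields $\mu_p((x_1,x_2),(z_1,z_2))=\tau(\mu_{X_1}(x_1,z_1),\mu_{X_2}(x_2,z_2))>0$. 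For \emph{anti-symmetry}, if $\mu_p((x_1,x_2),(y_1,y_2))>0$ and $\mu_p((y_1,y_2),(x_1,x_2))>0$, the key fact gives $\mu_{X_i}(x_i,y_i)>0$ and $\mu_{X_i}(y_i,x_i)>0$, so $x_i=y_i$ by anti-symmetry of $\mathbb{X}_i$, whence $(x_1,x_2)=(y_1,y_2)$.

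Finally, the general case follows by induction on $n$: by the recursion in Remark \ref{3.2}, the fuzzy relation $\mu_p$ on $\prod^n_{i=1}\mathbb{X}_i$ is, on the nose, the one obtained by pairing $\prod^{n-1}_{i=1}\mathbb{X}_i$ with $\mathbb{X}_n$ via $\tau$, so the binary case propagates upward from the trivial base $n=1$; alternatively, and matching the categorical machinery set up earlier, the binary case shows $\mathbf{FP}$ has all binary products, Remark \ref{2.9} supplies a terminal object, and Lemma \ref{4.1} then gives closure under all finite products. I do not expect a genuine obstacle: the only subtlety is recognizing that the no-zero-divisor condition is invoked in exactly one step (closing transitivity), whereas reflexivity, anti-symmetry, and the ``$\Rightarrow$'' direction of the key fact hold for every triangular norm --- and that the boundary values equal to $1$ must be dispatched via the unit laws rather than Definition \ref{4.6}.
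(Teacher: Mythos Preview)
Your proof is correct and follows essentially the same approach as the paper's: reduce to the binary case, verify the three fuzzy-poset axioms using the equivalence $\tau(a,b)>0 \Leftrightarrow a,b>0$, and bootstrap to finite $n$ via the categorical machinery of Lemma \ref{4.1} and Remark \ref{2.9}. You are slightly more explicit than the paper in isolating the key equivalence as a standalone fact, in dispatching the boundary values $a=1$ or $b=1$ separately, and in observing that the zero-divisor hypothesis enters only in the converse direction needed to close transitivity --- but the structure of the argument is identical.
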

\begin{proof}
By Lemma \ref{4.1} and Remark \ref{2.9}, it suffices to show the case when $n=2$. Therefore, assume that $\mathbb{X}_1=\langle X_1;\mu_{X_1}\rangle$ and $\mathbb{X}_2=\langle X_2;\mu_{X_2}\rangle$ are fuzzy posets. We first demonstrate that $\mathbb{X}_1\times\mathbb{X}_2=\langle X_1\times X_2;\mu_p\rangle$ is a reflexive fuzzy relational frame by showing $\mu_p((x_1,x_2),(x_1,x_2))=1$ for all $x_1\in X_1$ and $x_2\in X_2$. By hypothesis, $\mathbb{X}_1$ and $\mathbb{X}_2$ are reflexive frames and hence: 
\begin{equation}\label{e4.2}
    \mu_{X_1}(x_1,x_1)=\mu_{X_2}(x_2,x_2)=1
\end{equation}
The calculation that $\mathbb{X}_1\times\mathbb{X}_2$ is a reflexive frame is then immediate: 
    \begin{align*}
        \mu_p((x_1,x_2),(x_1,x_2))&=\tau(\mu_{X_1}(x_1,x_1),\mu_{X_2}(x_2,x_2))\tag{by Definition \ref{3.3}(2)}
        \\&=\tau(1,1)=1\tag{by (\ref{e4.2}) and Definition \ref{3.1}(4)}
    \end{align*}

Now suppose that $\mathbb{X}_1$ and $\mathbb{X}_2$ are transitive frames. Then we have: 
\begin{equation}\label{ee4.6}
    \mu_{X_1}(x_1,y_1)>0\wedge\mu_{X_1}(y_1,z_1)>0\Rightarrow\mu_{X_1}(x_1,z_1)>0
\end{equation}
\begin{equation}\label{ee4.7}
    \mu_{X_2}(x_2,y_2)>0\wedge\mu_{X_2}(y_2,z_2)>0\Rightarrow\mu_{X_2}(x_2,z_2)>0
\end{equation}The proof that $\mathbb{X}_1\times\mathbb{X}_2$ is a transitive frame then runs as follows: 
\begin{align*}
    \mu_p((&x_1,x_2),(y_1,y_2))>0\wedge\mu_p((y_1,y_2),(z_1,z_2))>0\\&\Leftrightarrow\tau(\mu_{X_1}(x_1,y_1),\mu_{X_2}(x_2,y_2))>0\wedge\tau(\mu_{X_1}(y_1,z_1),\mu_{X_2}(y_2,z_2))>0 \tag{by Definition \ref{3.3}(2)}
    \\&\Leftrightarrow\mu_{X_1}(x_1,y_1)>0\wedge\mu_{X_1}(y_1,z_1)>0\wedge\mu_{X_2}(x_2,y_2)>0\wedge\mu_{X_2}(y_2,z_2)>0\tag{by Proposition \ref{p3.2}(2)}
    \\&\Rightarrow\mu_{X_1}(x_1,z_1)>0\wedge\mu_{X_2}(x_2,z_2)>0\tag{by (\ref{ee4.6}) and (\ref{ee4.7})}
    \\&\Leftrightarrow\tau(\mu_{X_1}(x_1,z_1),\mu_{X_2}(x_2,z_2))>0\tag{since $\tau$ has no zero divisors}
    \\&\Leftrightarrow\mu_p((x_1,x_2),(z_1,z_2))>0\tag{by Definition \ref{3.3}(2)}
\end{align*}

To see that $\mathbb{X}_1\times\mathbb{X}_2$ is an anti-symmetric frame, by Definition \ref{2.1}(3) it suffices to show:
\begin{equation}\label{e4.7}
    \mu_{p}((x_1,x_2),(y_1,y_2))>0\wedge\mu_{p}((y_1,y_2),(x_1,x_2))>0\Rightarrow(x_1,x_2)=(y_1,y_2)
\end{equation}
By hypothesis, $\mathbb{X}_1$ and $\mathbb{X}_2$ are anti-symmetric fuzzy relational frames i.e.:
    \begin{equation}\label{e4.8}
\mu_{X_1}(x_1,y_1)>0\wedge\mu_{X_1}(y_1,x_1)>0\Rightarrow x_1=y_1
\end{equation}
\begin{equation}\label{e4.9}
\mu_{X_2}(x_2,y_2)>0\wedge\mu_{X_2}(y_2,x_2)>0\Rightarrow x_2=y_2
    \end{equation}
 The result is achieved by observing the following: 
    \begin{align*}
        \mu_{p}&((x_1,x_2),(y_1,y_2))>0\wedge\mu_{p}((y_1,y_2),(x_1,x_2))>0
        \\&\Leftrightarrow \tau(\mu_{X_1}(x_1,y_1),\mu_{X_2}(x_2,y_2))>0\wedge\tau(\mu_{X_1}(y_1,x_1),\mu_{X_2}(y_2,x_2))>0\tag{by Definition \ref{3.3}(2)}        \\&\Rightarrow\mu_{X_1}(x_1,y_1)>0\wedge\mu_{X_1}(y_1,x_1)>0\wedge\mu_{X_2}(y_1,y_2)>0\wedge\mu_{X_2}(y_2,y_1)>0\tag{by Proposition \ref{p3.2}(2)}
    \end{align*}
By the above as well as (\ref{e4.8}) and (\ref{e4.9}), we have $x_1=y_1$ and $x_2=y_2$ and hence $(x_1,x_2)=(y_1,y_2)$. This establishes (\ref{e4.7}) and therefore we conclude that $\mathbb{X}_1\times\mathbb{X}_2$ is an anti-symmetric frame.   
\end{proof}
\begin{corollary}
    If $\mathbb{X}_1,\dots,\mathbb{X}_n$ are reflexive and anti-symmetric fuzzy relational frames, then $\prod^n_{i=1}\mathbb{X}_i$ is a reflexive and anti-symmetric fuzzy relational frame under the realization of any triangular norm. 
\end{corollary}
Notice however that Lemma \ref{4.2} will not in general be true if one drops the requirement that the triangular norm realizing the product construction has no zero divisors. For instance, direct Lukasiewicz products where:
   \vspace{-.2cm} \[\mu_p\colon\prod^n_{i=1}X_i\times\prod^n_{i=1}X_i\to[0,1];\hspace{.2cm}\mu_p((x_1,\dots,x_n),(y_1,\dots,y_n))=\max\Biggl(\sum^n_{i=1}\mu_{X_i}(x_i,y_i)-(n-1),0\Biggl)\] 
  are realized by a triangular norm with no zero divisors, i.e., the Lukasiewicz triangular norm. One can easily find fuzzy posets $\mathbb{X}_1,\dots,\mathbb{X}_n$ satisfying the following inequalities: \[\sum^n_{i=1}\mu_{X_i}(x_i,y_i)>n-1\hspace{.2cm}\text{and}\hspace{.2cm}\sum^n_{i=1}\mu_{X_i}(y_i,z_i)>n-1\hspace{.2cm}\text{but}\hspace{.2cm}\sum^n_{i=1}\mu_{X_i}(x_i,z_i)\leq n-1\] 
This implies that
$\mu_p((x_1,\dots,x_n),(y_1,\dots,y_n))>0$ as well as $\mu_p((y_1,\dots,y_n),(z_1,\dots,z_n))>0$ but:  
\[\mu_p((x_1,\dots,x_n),(z_1,\dots,z_n))=0\]
so that $\prod^n_{i=1}\mathbb{X}_i$ is not a transitive fuzzy relational frame and hence not a fuzzy poset.

Table 3 exemplifies the general construction given above in the case $n=2$. 

\begin{table}[htbp]
\fontsize{4pt}{10pt}\selectfont
\begin{tabular}{|c|c|c|c|c|c|c|c|c|c|c|c|c|c|c|c|c|}
\hline
$\mu_p$ & $w_1w_2$ & $w_1x_2$ & $w_1y_2$ & $w_1z_2$ & $x_1w_2$ & $x_1x_2$ & $x_1y_2$ & $x_1z_2$ & $y_1w_2$ & $y_1x_2$ & $y_1y_2$ & $y_1z_2$ & $z_1w_2$ & $z_1x_2$ & $z_1y_2$ & $z_1z_2$ \\ \hline
$w_1w_2$ & 1    & 0.1   &   0.3   &    0.9   &   0.1    & 0     &   0    &   0    &    0.4   &  0     &    0    &    0.3    &    0.8   &   0     &  0.1    &   0.7     \\ \hline
$w_1x_2$ & 0  &     1   &  0   &    0.6   &   0  &    0.1    &     0   &    0    &     0   &     0.4   &   0     &   0     &   0     &   0.8      &     0   &     0.4   \\ \hline
$w_1y_2$ &  0  &      0  &   1     &   0.4     &    0    &   0     &    0.1    &     0   &    0    &     0   &   0.4     &     0   &     0   &     0    &    0.8    &    0.2    \\ \hline
$w_1z_2$ &  0  &   0     &     0   &   1     &   0     &    0    &   0     &   0.1     &     0   &    0    &    0    &   0.4     &  0      &    0     &      0  &    0.8    \\ \hline
$x_1w_2$ &  0  &    0    &     0   &   0    &   1     &    0.1    &   0.3     &   0.9     &     0.2   &    0    &   0     &   0.1     &  0.5      &   0      &   0     &    0.4    \\ \hline
$x_1x_2$ &  0  &     0   &     0   &   0     &   0     &    1    &     0   &    0.6    &   0     &     0.2   &    0    &    0    &  0      &   0.5      &   0     &   0.1     \\ \hline
$x_1y_2$ &  0  &   0     &     0   &   0    &    0    &     0   &    1    &    0.4    &   0     &   0     &   0.2    &     0   &   0     &    0     &   0.5     &    0    \\ \hline
$x_1z_2$ &  0  &    0    &     0   &   0     &   0     &    0    &   0     &   1     &  0      &     0   &    0    &    0.2    &    0    &     0    &  0      &  0.5      \\ \hline
$y_1w_2$ &  0  &     0   &     0   &   0    &    0    &     0   &    0    &    0    &  1      &   0.1     &   0.3     &   0.9     &   0.3     &    0     &   0     &   0.2     \\ \hline
$y_1x_2$ &  0  &      0  &     0   &   0    &    0    &     0   &    0    &    0    &   0     &   1     &   0     &    0.6    &  0      &    0.3     &  0      &    0    \\ \hline
$y_1y_2$ &  0  &       0 &     0   &   0    &    0    &     0   &    0    &    0    &   0     &   0    &   1     &   0.4     &   0     &     0    &  0.3      &   0     \\ \hline
$y_1z_2$ &  0  &        0 &    0    &  0     &   0     &    0    &   0     &   0     &  0      &  0      &     0   &  1      &   0     &    0     &  0      &    0.3    \\\hline
$z_1w_2$ &  0  &        0 &    0    &  0     &   0     &    0    &   0     &   0     &  0      &   0     &     0   &   0     &  1      &    0.1     &   0.3     &    0.9    \\\hline
$z_1x_2$ &  0  &        0 &    0    &  0     &   0     &    0    &   0     &   0    &   0     &     0   &      0  &     0   &     0   &  1       &     0   &  0.6      \\ \hline
$z_1y_2$ &  0  &        0 &    0    &  0     &   0     &    0    &   0     &   0     &  0      &     0   &     0   &     0   &   0     &   0      &   1     &     0.4   \\ \hline
$z_1z_2$ &  0  &        0 &    0    &  0     &   0     &    0    &   0     &   0     &  0      &     0   &    0    &     0   &  0      &    0     &   0     &  1      \\ \hline
\end{tabular}\caption{The direct Lukasiewicz product $\mathbb{X}_1\times\mathbb{X}_2$}\label{table3}
\end{table}

It is clear that the direct Lukasiewicz product $\mathbb{X}_1\times\mathbb{X}_2$ depicted in Table 3 is not a transitive frame since for example we have $\mu_p((w_1,x_2),(x_1,x_2))>0$ and $\mu_p((x_1,x_2),(x_1,z_2))>0$ however $\mu_p((w_1,x_2),(x_1,z_2))=0$. Therefore, the hypothesis that the triangular norm realizing the direct product has no zero divisors is required in order to guarantee the preservation of transitivity.  We now arrive at the main result of this section.

\begin{theorem}\label{4.8}
    If $\mathbb{X}_1=\langle X_1;\mu_{X_1}\rangle,\dots,\mathbb{X}_n=\langle X_n;\mu_{X_n}\rangle$ is a finite family of bounded fuzzy lattices, then $\prod_{i=1}^n\mathbb{X}_i=\langle\prod^n_{i=1}X_i;\mu_p\rangle$ is a bounded fuzzy lattice whenever the triangular norm whose $n$-ary extension realizes its construction has no zero divisors.   
\end{theorem}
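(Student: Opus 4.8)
The plan is to reduce the $n$-ary case to the binary case via Lemma \ref{4.1} and Remark \ref{2.9}, exactly as in the proof of Lemma \ref{4.2}. Since the one-element fuzzy relational frame of Proposition \ref{2.8} is a terminal object in $\mathbf{BFL}$, by Lemma \ref{4.1} it suffices to prove that $\mathbf{BFL}$ has all binary products, i.e., that $\mathbb{X}_1\times\mathbb{X}_2=\langle X_1\times X_2;\mu_p\rangle$ is a bounded fuzzy lattice whenever $\mathbb{X}_1$ and $\mathbb{X}_2$ are, provided $\tau$ has no zero divisors. (I should also check that the binary product in $\mathbf{BFL}$ so constructed genuinely satisfies the universal mapping property, with the coordinate projections as the structure maps; this is routine since the projections are bounded homomorphisms by the coordinatewise definitions of $\odot$, $\oplus$, $0$, $1$ in Definition \ref{3.3}, and any competing cone factors through them uniquely because a map into a product of sets is determined by its components.)

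First I would invoke Lemma \ref{4.2}: since $\mathbb{X}_1$ and $\mathbb{X}_2$ are in particular fuzzy posets and $\tau$ has no zero divisors, $\mathbb{X}_1\times\mathbb{X}_2$ is already a fuzzy poset. So the only thing left to verify is the lattice condition: for every pair of elements $(x_1,x_2),(y_1,y_2)\in X_1\times X_2$, the fuzzy meet $\odot$ and fuzzy join $\oplus$ exist, and the bottom and top elements exist. For the meet, I would show that $(x_1\odot y_1,x_2\odot y_2)$ — which exists in each coordinate since $\mathbb{X}_1,\mathbb{X}_2$ are fuzzy lattices — is the greatest fuzzy lower bound of $\{(x_1,x_2),(y_1,y_2)\}$ in the sense of Definition \ref{2.2}. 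That it is a fuzzy lower bound follows because $\mu_{X_i}(x_i\odot y_i, x_i)>0$ and $\mu_{X_i}(x_i\odot y_i, y_i)>0$ by Proposition \ref{prop2.5}(1), whence $\mu_p((x_1\odot y_1,x_2\odot y_2),(x_1,x_2))=\tau(\mu_{X_1}(x_1\odot y_1,x_1),\mu_{X_2}(x_2\odot y_2,x_2))>0$ since $\tau$ has no zero divisors (and similarly for the $(y_1,y_2)$ component). That it is the greatest such: if $(z_1,z_2)$ is any fuzzy lower bound of $\{(x_1,x_2),(y_1,y_2)\}$, then $\mu_p((z_1,z_2),(x_1,x_2))>0$ and $\mu_p((z_1,z_2),(y_1,y_2))>0$, so by Proposition \ref{p3.2}(2) each $\mu_{X_i}(z_i,x_i)>0$ and $\mu_{X_i}(z_i,y_i)>0$, so $z_i$ is a fuzzy lower bound of $\{x_i,y_i\}$ in $\mathbb{X}_i$, hence $\mu_{X_i}(z_i,x_i\odot y_i)>0$ by Proposition \ref{prop2.5}(3) (or directly by the definition of fuzzy meet), hence $\mu_p((z_1,z_2),(x_1\odot y_1,x_2\odot y_2))=\tau(\mu_{X_1}(z_1,x_1\odot y_1),\mu_{X_2}(z_2,x_2\odot y_2))>0$ again because $\tau$ has no zero divisors. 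By anti-symmetry of $\mu_p$ the meet is unique, so $(x_1,x_2)\odot(y_1,y_2)=(x_1\odot y_1,x_2\odot y_2)$ as in Definition \ref{3.3}(3). The join argument is entirely dual, using parts (1), (2) of Proposition \ref{prop2.5}. For boundedness, $(0_{X_1},0_{X_2})$ and $(1_{X_1},1_{X_2})$ work: $\mu_p((0_{X_1},0_{X_2}),(x_1,x_2))=\tau(\mu_{X_1}(0_{X_1},x_1),\mu_{X_2}(0_{X_2},x_2))>0$ for all $(x_1,x_2)$ since each factor is positive by Definition \ref{2.3}(2) and $\tau$ has no zero divisors, and dually for the top element.

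Having established that $\mathbf{BFL}$ has binary products and a terminal object, Lemma \ref{4.1} gives that $\mathbf{BFL}$ is closed under all finite products, which is exactly the claim: $\prod_{i=1}^n\mathbb{X}_i$ is a bounded fuzzy lattice whenever $\mathbb{X}_1,\dots,\mathbb{X}_n$ are, and the $n$-ary extension of $\tau$ realizing the construction inherits the no-zero-divisors hypothesis because, by associativity of $\tau$ (Definition \ref{3.1}(1)) and an easy induction, $\T^n_{i=1} a_i>0$ whenever every $a_i>0$.

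The main obstacle, such as it is, is bookkeeping rather than depth: every implication quietly uses that $\tau(a,b)>0 \iff a>0 \wedge b>0$ for a zero-divisor-free $\tau$ (the forward direction being Proposition \ref{p3.2}(2), the backward direction the zero-divisor-free hypothesis together with $\tau(a,1)=a$ for the boundary cases), so I would isolate that biconditional as a one-line observation and then apply it uniformly. The one genuinely non-mechanical point is making sure the categorical reduction is legitimate — that the coordinatewise-defined $\mathbb{X}_1\times\mathbb{X}_2$ really is the categorical product in $\mathbf{BFL}$, not merely a bounded fuzzy lattice that happens to sit on the product set — but this follows from the projections being bounded homomorphisms and the universal property of the underlying Cartesian product of sets, so it costs only a short paragraph.
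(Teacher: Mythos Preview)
Your proposal is correct and follows essentially the same approach as the paper: reduce to $n=2$ via Lemma \ref{4.1} and Remark \ref{2.9}, invoke Lemma \ref{4.2} for the fuzzy poset structure, then verify coordinatewise that $(x_1\odot y_1,x_2\odot y_2)$ and $(x_1\oplus y_1,x_2\oplus y_2)$ are the fuzzy meet and join and that $(0_{X_1},0_{X_2})$, $(1_{X_1},1_{X_2})$ are the bounds, each step using the biconditional $\tau(a,b)>0\Leftrightarrow a>0\wedge b>0$. If anything you are slightly more careful than the paper --- you explicitly pull an arbitrary lower bound $(z_1,z_2)$ in the product back to coordinatewise lower bounds via Proposition \ref{p3.2}(2), you check that the projections make $\mathbb{X}_1\times\mathbb{X}_2$ the categorical product (which the paper uses implicitly through Lemma \ref{4.1} but never verifies), and you note that the $n$-ary extension of $\tau$ inherits the no-zero-divisors property.
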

\begin{proof}
    By Lemma \ref{4.2}, we know what $\prod^n_{i=1}\mathbb{X}_i$ is a fuzzy poset. Again, it suffices to show the case when $n=2$. 
    
    Therefore let $\mathbb{X}_1=\langle X_1;\mu_{X_1}\rangle$ and $\mathbb{X}_2=\langle X_2;\mu_{X_2}\rangle$ be arbitrary bounded fuzzy lattices. We first want to show that $\mu_{p}((z_1,z_2),(x_1,x_2)\oplus(y_1,y_2))>0$ for $z_1\in\{x_1,y_1\}$ and $z_2\in\{x_2,y_2\}$. By hypothesis we have:
    \begin{equation}\label{e4.10}
        \mu_{X_1}(z_1,x_1\oplus y_1)>0,\hspace{.2cm}\mu_{X_2}(z_2,x_2\oplus y_2)>0
    \end{equation}

The calculation therefore proceeds as follows: 
    \begin{align*}
        \mu_{p}((z_1,z_2),(x_1,x_2)\oplus(y_1,y_2))&=\mu_{p}((z_1,z_2),(x_1\oplus y_1,x_2\oplus y_2))\tag{by Definition \ref{3.3}(4)}
        \\&=\tau(\mu_{X_1}(z_1,x_1\oplus y_1),\mu_{X_2}(z_2,x_2\oplus y_2))\tag{by Definition \ref{3.3}(2)}
        \\&>0\tag{by (\ref{e4.10}) and $\tau$ having no zero divisors}
    \end{align*}
It remains to show that $\mu_{p}((x_1,x_2)\oplus(y_1,y_2),(w_1,w_2))>0$ for any fuzzy upper bound $w_1$ of $\{x_1,y_1\}\subseteq X_1$ and any fuzzy upper bound $w_2$ of $\{x_2,y_2\}\subseteq X_2$. By hypothesis:
 \begin{equation}\label{e4.12}
     \mu_{X_1}(x_1\oplus y_1,w_1)>0,\hspace{.2cm}\mu_{X_2}(x_2\oplus y_2,w_2)>0
 \end{equation}
 The calculation runs similarly as above:
    \begin{align*}
        \mu_{p}((x_1,x_2)\oplus(y_1,y_2),(w_1,w_2))&=\mu_{p}((x_1\oplus y_1,x_2\oplus y_2),(w_1,w_2))\tag{by Definition \ref{3.3}(4)}
        \\&=\tau(\mu_{X_1}(x_1\oplus y_1,w_1),\mu_{X_2}(x_1\oplus y_2,w_2))\tag{by Definition \ref{3.3}(2)}
        \\&>0\tag{by (\ref{e4.12}) and $\tau$ having no zero divisors}
    \end{align*}
    Dually, we must now verify that for all $x_1,y_1\in X_1$ and $x_2,y_2\in X_2$ we have $\mu_{p}((x_1,x_2)\odot(y_1,y_2),(z_1,z_2))>0$ for $z_1\in\{x_1,y_1\}$ and $z_2\in\{x_2,y_2\}$. By hypothesis, $\mathbb{X}_1$ and $\mathbb{X}_2$ are fuzzy lattices and hence: 
\begin{equation}\label{e4.13}
    \mu_{X_1}(x_1\odot y_1,z_1)>0,\hspace{.2cm}\mu_{X_2}(x_2\odot y_2,z_2)>0
\end{equation}
The calculation runs dually to the case of $\oplus$:
\begin{align*}
    \mu_{p}((x_1,x_2)\odot(y_1,y_2),(z_1,z_2))&=\mu_{p}((x_1\odot y_1,x_2\odot y_2),(z_1,z_2))\tag{by Definition \ref{3.3}(3)}
    \\&=\tau(\mu_{X_1}(x_1\odot y_1,z_1),\mu_{X_2}(x_2\odot y_2,z_2))\tag{by Definition \ref{3.3}(2)}
    \\&>0\tag{by (\ref{e4.13}) and $\tau$ having no zero divisors}
\end{align*}
 It remains to show that $\mu_{p}((v_1,v_2),(x_1,x_2)\odot(y_1,y_2))>0$ for every fuzzy lower bound $v_1$ of $\{x_1,y_1\}\subseteq X_1$ and every fuzzy lower bound $v_2$ of $\{x_2,y_2\}\subseteq X_2$. By hypothesis, we have: 
\begin{equation}\label{e4.14}
    \mu_{X_1}(v_1,x_1\odot y_1)>0,\hspace{.2cm}\mu_{X_2}(v_2,x_2\odot y_2)>0
\end{equation}
The calculation runs similarly to the above case: 
\begin{align*}
     \mu_{p}((v_1,v_2),(x_1,x_2)\odot(y_1,y_2))&=\mu_{p}((v_1,v_2),(x_1\odot y_1,x_2\odot y_2))\tag{by Definition of \ref{3.3}(3)}
     \\&=\tau(\mu_{X_1}(v_1,x_1\odot y_1),\mu_{X_2}(v_2,x_2\odot y_2))\tag{by Definition \ref{3.3}(2)}
     \\&>0\tag{by (\ref{e4.14}) and $\tau$ having no zero divisors}
\end{align*}

It remains to verify that $\mathbb{X}_1\times\mathbb{X}_2$ is bounded by first showing that there exist $(0_{X_1},0_{X_2})$,$(1_{X_1},1_{X_2})\in X_1\times X_2$ such that $\mu_{p}((0_{X_1},0_{X_2}),(x_1,x_2))>0$ and $\mu_{p}((x_1,x_2),(1_{X_1},1_{X_2}))>0$ for all $(x_1,x_2)\in X_1\times X_2$. By hypothesis, $\mathbb{X}_1$ and $\mathbb{X}_2$ are bounded and hence there exist elements $0_{X_1}\in X_1$ and $0_{X_2}\in X_2$ such that $\mu_{X_1}(0_{X_1},x_1)>0$ for all $x_1\in X_1$ and $\mu_{X_2}(0_{X_2},x_2)>0$ for all $x_2\in X_2$. This, Definition \ref{3.3}(2), and lastly our assumption that $\tau$ has no zero divisors gives:
\begin{align*}
    \mu_{p}((0_{X_1},0_{X_2}),(x_1,x_2))&=\tau(\mu_{X_1}(0_{X_1},x_1),\mu_{X_2}(0_{X_2},x_2))\tag{by Definition \ref{3.3}(2)}
    \\&>0\tag{$0_{X_1}$, $0_{X_2}$ are bounds, $\tau$ having no zero divisors.}
\end{align*} 
 Similarly to the above case, we know that there exists $1_{X_1}\in X_1$ such that $\mu_{X_1}(x_1,1_{X_1})>0$ for all $x_1\in X_1$ and there exists $1_{X_2}\in X_2$ such that $\mu_{X_2}(x_2,1_{X_2})>0$ for all $x_2\in X_2$ since $\mathbb{X}_1$ and $\mathbb{X}_2$ are bounded. This, Definition \ref{3.3}(2), and our assumption that $\tau$ has no zero divisors gives:  
\begin{align*}
    \mu_{p}((x_1,x_2),(1_{X_1},1_{X_2}))&=\tau(\mu_{X_1}(x_1,1_{X_1}),\mu_{X_2}(x_2,1_{X_2}))\tag{by Definition \ref{3.3}(2)}
    \\&>0\tag{$1_{X_1}$, $1_{X_2}$ are bounds, $\tau$ having no zero divisors.}
\end{align*}
This completes the proof. 
 \end{proof}

 \begin{remark}
    Notice that Theorem \ref{4.8} is a significant generalization of Theorem \ref{3.5}. Whereas Theorem \ref{3.5} guarantees the closure of bounded fuzzy lattices under direct products realized specifically by the minimum triangular norm, Theorem \ref{4.8} guarantees the closure of bounded fuzzy lattices under a much more general construction of direct products; namely,  direct products realized by any triangular norm with no zero divisors, including the minimum triangular norm as a basic example of this more general result. Moreover, since the fuzzy lattices considered in this paper arise via fuzzy posets which are defined in part using preference sensitive transitivity, opposed to sup-min transitivity, we are working with a more general class of algebraic structures relative to those considered in \cite{cho2009, mez}.  
 \end{remark}

Further examples of Theorem \ref{4.8} include for instance the direct algebraic product as well as the Hamacher product, i.e., direct products with fuzzy relations $\mu_p\colon \prod^n_{i=1}X_i\times\prod^n_{i=1}X_i\to[0,1]$ defined by:
 \[\mu_{p}((x_1,\dots,x_n),(y_1,\dots,y_n))=\prod^n_{i=1}\mu_{X_i}(x_i,y_i)\]
  \[\mu_p((x_1,\dots,x_n),(y_1,\dots,y_n))=\begin{cases}0, & \text{if $\mu_{X_1}(x_1,y_1)=\dots=\mu_{X_n}(x_n,y_n)=0$}\\\frac{\prod^n_{i=1}\mu_{X_i}(x_i,y_i)}{\sum^n_{i=1}\mu_{X_i}(x_i,y_i)-\prod^n_{i=1}\mu_{X_i}(x_i,y_i)}, & \text{otherwise}\end{cases}\]
which as already described in Example \ref{4.7} are among the family of triangular norms with no zero divisors. 
  
  Closely related to the zero divisors of a triangular norm are its nilpotent elements.

\begin{definition}
    Let $\tau\colon[0,1]\times[0,1]\to[0,1]$ be a triangular norm. Then $a\in(0,1)$ is a \emph{nilpotent element} of $\tau$ if there exists some positive integer $n\in\mathbb{Z}^+$ such that $\T^n_{i=1}a_i=0$ where $a_i=a$ for $1\leq i\leq n$. 
\end{definition}

The following is well-known. 
\begin{lemma}[\cite{kle}]\label{4.11}
    For every triangular norm $\tau\colon[0,1]\times[0,1]\to[0,1]$, the following statements are equivalent: 
    \begin{enumerate}
        \item $\tau$ has zero divisors
        \item $\tau$ has nilpotent elements 
    \end{enumerate}
\end{lemma}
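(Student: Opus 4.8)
The plan is to prove Lemma \ref{4.11} by establishing both implications separately, with the nontrivial direction being that the existence of nilpotent elements implies the existence of zero divisors.

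First I would handle the direction $(2)\Rightarrow(1)$. Suppose $a\in(0,1)$ is nilpotent, so there is a least positive integer $n$ with $\T^n_{i=1}a = 0$ (here all arguments equal $a$). Since $a\in(0,1)$ and $\tau(a,1)=a\neq 0$, we must have $n\geq 2$. By minimality of $n$, the value $b:=\T^{n-1}_{i=1}a$ is nonzero, and by the definition of the $n$-ary extension in Remark \ref{3.2} we have $0=\T^n_{i=1}a=\tau(b,a)$. It remains to check that $b\in(0,1)$: we already have $b>0$ by minimality, and $b<1$ because if $n-1\geq 2$ then $b=\tau(b',a)\leq\tau(1,a)=a<1$ using monotonicity (Definition \ref{3.1}(2)) and the boundary condition (Proposition \ref{p3.2}(1)), while if $n-1=1$ then $b=a<1$ outright. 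Hence $a$ is a zero divisor witnessed by $b$, so $\tau$ has zero divisors.

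Next I would handle $(1)\Rightarrow(2)$. Suppose $a\in(0,1)$ is a zero divisor, so there is $b\in(0,1)$ with $\tau(a,b)=0$. Set $c:=\min(a,b)\in(0,1)$. Using monotonicity of $\tau$ in each argument (Definition \ref{3.1}(2)) together with commutativity (Definition \ref{3.1}(3)), we get $\tau(c,c)\leq\tau(a,b)=0$, so $\tau(c,c)=0$, i.e. $\T^2_{i=1}c=0$. Thus $c$ is a nilpotent element (with $n=2$), and $\tau$ has nilpotent elements. Combining the two implications yields the equivalence.

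I do not anticipate a serious obstacle here; the only point requiring care is verifying that the witnessing element in the $(2)\Rightarrow(1)$ direction genuinely lies in the open interval $(0,1)$ rather than hitting the endpoints, which is why the argument is organized around choosing $n$ minimal and invoking the boundary/monotonicity facts from Definition \ref{3.1} and Proposition \ref{p3.2}. Everything else is a direct unwinding of the definitions of nilpotent element, zero divisor, and the inductive $n$-ary extension of $\tau$.
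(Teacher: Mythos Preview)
Your argument is correct in both directions. The only minor quibble is the phrasing ``Since $a\in(0,1)$ and $\tau(a,1)=a\neq 0$, we must have $n\geq 2$'': what you really need here is just that the $1$-fold product $\T^1 a = a$ is positive, which is immediate from $a\in(0,1)$; invoking $\tau(a,1)$ is unnecessary (though harmless).

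As for comparison with the paper: the paper does not actually supply a proof of this lemma at all---it simply cites \cite[Proposition 2.5]{kle}. So your write-up goes strictly beyond what the paper does, giving a self-contained argument using only the monotonicity, commutativity, and boundary axioms already recorded in Definition~\ref{3.1} and Proposition~\ref{p3.2}.
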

\begin{definition}
    Let $\tau\colon[0,1]\times[0,1]\to[0,1]$ be a triangular norm. Then:
    \begin{enumerate}
        \item $\tau$ is \emph{strictly monotone} whenever $x>0$ and $y<z$ implies $\tau(x,y)<\tau(x,z)$;
        \item $\tau$ is \emph{cancellative} whenever $\tau(x,y)=\tau(x,z)$ implies $x=0$ or $y=z$. 
    \end{enumerate}
\end{definition}

A straightforward induction on $n$ shows that if $\tau$ is a triangular norm with nilpotent elements (respectively, no nilpotent elements), then every $n$-ary extension of $\tau$ has nilpotent elements (respectively, no nilpotent elements) and similarly that if $\tau$ is strictly monotone (respectively, cancellative), then every $n$-ary extension of $\tau$ is strictly monotone (respectively, cancellative).  

\begin{lemma}[\cite{kle}]\label{strict}
    Let $\tau\colon[0,1]\times[0,1]\to[0,1]$ be a triangular norm. Then:
    \begin{enumerate}
        \item $\tau$ is strictly monotone if and only if $\tau$ is cancellative;
        \item if $\tau$ is strictly monotone, then $\tau$ has no zero divisors. 
    \end{enumerate}
\end{lemma}

By Lemmas \ref{4.11} and \ref{strict}, we immediately arrive at the following as a consequence of Theorem \ref{4.8}.

 \begin{corollary}
     Let $\mathbb{X}_1,\dots,\mathbb{X}_n$ be bounded fuzzy lattices. Then their direct product $\prod^n_{i=1}\mathbb{X}_i=\langle\prod^n_{i=1}X_i;\mu_p\rangle$ is a bounded fuzzy lattice whenever any of the following conditions obtain:
     \begin{enumerate}
         \item $\mu_p$ is defined by a triangular norm with no nilpotent elements;
         \item $\mu_p$ is defined by a strictly monotone triangular norm;
         \item $\mu_p$ is defined by cancellative triangular norm. 
     \end{enumerate}

 \end{corollary}

\section{Ordinal products realized by \emph{t}-norms with no zero divisors}

If $\mathbb{X}_1=\langle X_1;\leq_{X_1}\rangle$ and $\mathbb{X}_2=\langle X_2;\leq_{X_2}\rangle$ are posets, their \emph{ordinal product} is a relational structure $\mathbb{X}_1\otimes\mathbb{X}_2=\langle X_1\times X_2;\preceq_{X\times Y}\rangle$ such that $X_1\times X_2$ is their Cartesian product and $\preceq_{X\times Y}$ is a subset of $X_1\times X_2$, known as the \emph{lexicographic ordering}, defined by $(x_1,x_2)\preceq_{X_1\times X_2}(y_1,y_2)$ if and only either $x_1<_{X_1}y_1$ or $x_1=y_1$ and $x_2\leq_{X_2}y_2$. It is well-known that (linear) posets are closed under the formation of ordinal products. The aim of this section is to generalize this result to the setting of (linear) fuzzy posets.

The first step is to introduce the notion of ordinal products within the setting of fuzzy posets. 

\begin{definition}\label{ordinal product}
    Let $\mathbb{X}_1=\langle X_1;\mu_{X_1}\rangle$ and $\mathbb{X}_2=\langle X_2;\mu_{X_2}\rangle$ be fuzzy posets. Their \emph{ordinal product} is a fuzzy relational frame $\mathbb{X}_1\otimes\mathbb{X}_2=\langle X_1\times X_2;\mu_l\rangle$ such that: \begin{enumerate}
        \item $X_1\times X_2$ is the Cartesian product; 
        \item $\mu_l\colon(X_1\times X_2)\times(X_1\times X_2)\to[0,1]$ is a fuzzy relation defined by: 
        \[\mu_l((x_1,x_2),(y_1,y_2))=\begin{cases}
            \mu_{X_1}(x_1,y_1), & \text{if $\mu_{X_1}(x_1,y_1)>0$ and $x_1\not= y_1$}\\
            \tau(\mu_{X_1}(x_1,y_1),\mu_{X_2}(x_2,y_2)), & \text{otherwise}
        \end{cases}\]
        where $\tau\colon[0,1]^2\to[0,1]$ is some triangular norm. 
    \end{enumerate}   
\end{definition}

\begin{theorem}\label{thm ord prod}
    If $\mathbb{X}_1=\langle X_1,\mu_{X_1}\rangle$ and $\mathbb{X}_2=\langle X_2;\mu_{X_2}\rangle$ are fuzzy posets, then their ordinal product $\mathbb{X}_1\otimes\mathbb{X}_2$ is a fuzzy poset whenever it is realized by a triangular norm with no zero divisors. 
 \end{theorem}
 \begin{proof}
     Assume that $\mathbb{X}_1$ and $\mathbb{X}_2$ are fuzzy posets. Since $\mathbb{X}_1$ and $\mathbb{X}_2$ are reflexive frames, it is obvious that:
     \begin{equation}\label{5.1}
         \mu_{X_1}(x_1,x_1)=\mu_{X_2}(x_2,x_2)=1
     \end{equation}
     and therefore the following is immediate: 
     \begin{align*}
         \mu_l((x_1,x_2),(x_1,x_2))&=\tau(\mu_{X_1}(x_1,x_1),\mu_{X_2}(x_2,x_2))\tag{by Definition \ref{ordinal product}(2)}
         \\&=\tau(1,1)=1\tag{by (\ref{5.1}) and Definition \ref{3.1}(4)}
     \end{align*}
Therefore the ordinal product $\mathbb{X}_1\otimes\mathbb{X}_2$ is a reflexive frame.  

To see that $\mathbb{X}_1\otimes\mathbb{X}_2$ is anti-symmetric, assume $\mu_l((x_1,x_2),(y_1,y_2))>0$ and $\mu_l((y_1,y_2),(x_1,x_2))>0$. By hypothesis $\mathbb{X}_1=\langle X_1;\mu_{X_1}\rangle$ and $\mathbb{X}_2=\langle X_2;\mu_{X_2}\rangle$ are anti-symmetric and hence $\mu_{X_1}(x_1,y_1)>0$ and $\mu_{X_1}(y_1,x_1)>0$ implies $x_1=y_1$ as well as $\mu_{X_2}(x_2,y_2)>0$ and $\mu_{X_2}(y_2,x_2)>0$ implies $x_2=y_2$. By way of contradiction, suppose that $(x_1,x_2)\not=(y_1,y_2)$. Consider the case when $x_1\not=y_1$. By Definition \ref{2.1}(3) we have $\mu_{X_1}(x_1,y_1)=0$ or $\mu_{X_1}(y_1,x_1)=0$. If the former:
\begin{align*}
    \mu_{l}((x_1,x_2),(y_1,y_2))&=\tau(\mu_{X_1}(x_1,y_1),\mu_{X_2}(x_2,y_2))\tag{by Definition \ref{ordinal product}(2)}
    \\&=\tau(0,\mu_{X_2}(x_2,y_2))\tag{by hypothesis}
    \\&=0\tag{by Proposition \ref{p3.2}(2)}
\end{align*}     
which contradicts our hypothesis that $\mu_l((x_1,x_2),(y_1,y_2))>0$. The latter case when $\mu_{X_1}(y_1,x_1)=0$ runs the same.  If $x_2\not= y_2$, then again by Definition \ref{2.1}(3) we have $\mu_{X_2}(x_2,y_2)=0$ or $\mu_{X_2}(y_2,x_2)=0$ and the proofs follow similarly.  



 Now assume $\mathbb{X}_1$ and $\mathbb{X}_2$ are transitive frames and let $\mu_{l}((x_1,x_2),(y_1,y_2))>0$ and $\mu_{l}((y_1,y_2),(z_1,z_2))>0$. By Definition \ref{ordinal product}(2), it is then obvious that $\mu_{X_1}(x_1,y_1)$, $\mu_{X_1}(y_1,z_1)$, $\mu_{X_2}(x_2,y_2)$, $\mu_{X_2}(y_2,z_2)>0$. Since $\mathbb{X}_1=\langle X_1;\mu_{X_1}\rangle$ and $\mathbb{X}_2=\langle X_2;\mu_{X_2}\rangle$ are transitive, we find that $\mu_{X_1}(x_1,z_1)>0$ and $\mu_{X_2}(x_2,z_2)>0$. In the case when $x_1\not=z_1$, we have $\mu_{l}((x_1,x_2),(z_2,z_2))=\mu_{X_1}(x_1,z_1)>0$. In the case when $x_1=z_1$, by Definition \ref{ordinal product} and our assumption that the triangular norm $\tau$ realizing the ordinal product construction of $\mathbb{X}_1\otimes\mathbb{X}_2$ has no zero divisors, we have
    $\mu_{l}((x_1,x_2),(z_1,z_2))=\tau(\mu_{X_1}(x_1,z_1),\mu_{X_2}(x_2,z_2))>0$.
Therefore we conclude that $\mathbb{X}_1\otimes\mathbb{X}_2$ is a transitive frame and moreover, that $\mathbb{X}_1\otimes\mathbb{X}_2$ is a fuzzy poset, which completes the proof.  
 \end{proof}

A quick inspection of the proof of Theorem \ref{thm ord prod} yields the following. 
 
\begin{corollary}
    If $\mathbb{X}_1$ and $\mathbb{X}_2$ are reflexive and anti-symmetric fuzzy relational frames, then so is their ordinal product $\mathbb{X}_1\otimes\mathbb{X}_2$ under the realization of any triangular norm. 
\end{corollary}

The final result extends Theorem \ref{thm ord prod} to the case of linear fuzzy posets. 
 \begin{theorem}\label{ord prod poset}
     If $\mathbb{X}_1=\langle X_1;\mu_{X_1}\rangle$ and $\mathbb{X}_2=\langle X_2;\mu_{X_2}\rangle$ are linear fuzzy posets, then their ordinal product $\mathbb{X}_1\otimes\mathbb{X}_2$ is a linear fuzzy poset whenever it is realized by a triangular norm with no zero divisors. 
 \end{theorem}
 \begin{proof}
     Assume that $\mathbb{X}_1=\langle X_1;\mu_{X_1}\rangle$ and $\mathbb{X}_2=\langle X_2;\mu_{X_2}\rangle$ are linear fuzzy posets so that $\mu_{X_1}(x_1,y_1)>0$ or $\mu_{X_1}(y_1,x_1)>0$ for all $x_1,y_1\in X_1$ and $\mu_{X_2}(x_2,y_2)>0$ or $\mu_{X_2}(y_2,x_2)>0$ for all $x_2,y_2\in X_2$.


      If $\mu_{X_1}(x_1,y_1)>0$ such that $x_1\not=y_1$ and $\mu_{X_2}(x_2,y_2)>0$, then:
     \begin{align*}
         \mu_l((x_1,x_2),(y_1,y_2))&=\mu_{X_1}(x_1,y_1)\tag{by Definition \ref{ordinal product}(2)}
         \\&>0\tag{by hypothesis}
     \end{align*}
     If $\mu_{X_1}(x_1,y_1)>0$ such that $x_1=y_1$ and $\mu_{X_2}(x_2,y_2)>0$, then:
\begin{align*}
    \mu_l((x_1,x_2),(y_1,y_2))&=\tau(\mu_{X_1}(x_1,y_1),\mu_{X_2}(x_2,y_2))\tag{by Definition \ref{ordinal product}(2)}
    \\&=\tau(1,\mu_{X_2}(x_2,y_2))\tag{$\mu_{X_1}$ is reflexive and $x_1=y_1$}
    \\&=\mu_{X_2}(x_2,y_2)\tag{by Proposition \ref{p3.2}(1)}
    \\&>0\tag{by hypothesis}
\end{align*}

     The remaining cases run completely analogously and hence we omit them. Therefore, we conclude that $\mathbb{X}_1\otimes\mathbb{X}_2$ is a linear fuzzy poset whenever $\mathbb{X}_1$ and $\mathbb{X}_2$ are linear fuzzy posets.    
     \end{proof}
\begin{remark}
It is obvious within the above proof that the preservation of the property of linearity under the construction of ordinal products is independent of the triangular norm realizing the product construction. However, as was seen in the proof of Theorem \ref{thm ord prod}, the absence of zero divisors for the triangular norm realizing the ordinal product is required in preserving transitivity, hence the additional hypothesis regarding zero divisors within the statement of Theorem \ref{ord prod poset}.    
\end{remark}    

\section{Conclusion}
We have strengthened the results obtained within the setting of direct minimum products in \cite{cho2009, mez} by showing that bounded fuzzy lattices are closed under a much more general construction of direct products; namely direct products realized by triangular norms with no zero divisors. This result provides a purely algebraic understanding of the results in \cite{cho2009, mez} in terms of what general algebraic properties about the minimum triangular norm are being exploited within their proofs and in this sense, explains why their result is true in the case of direct minimum products. Moreover, since the fuzzy lattices considered in this paper arise via fuzzy posets which are defined in part using preference sensitive transitivity, opposed to sup-min transitivity, we are working with a more general class of algebraic structures relative to those considered in \cite{cho2009, mez}. A methodological advantage gained from the overall approach developed in this paper is that it allows for the application of a much more broad class of triangular norms within the theory of algebraic operations on fuzzy posets and lattices. We have also introduced ordinal products within the setting of fuzzy posets and shown this construction preserves the desired relational properties under the hypothesis that the triangular norm realizing the product construction has no zero divisors. Since closure under the formation of direct products (in the case of bounded fuzzy lattices) and ordinal products (in the case of (linear) fuzzy posets) are desirable properties, at least to the extent that they mimic classical theory, the results obtained in this work imply that triangular norms with no zero divisors play an important role within the algebraic analysis of fuzzy ordered sets.

The results obtained in this work also suggest that there are possible applications of the family of triangular norms with no zero divisors within the theory of fuzzy preference relations and social welfare functions (see \cite{bar, das}), wherein fuzzy transitive relations play an important role. Specifically, in approaches to these research areas in which preference sensitive transitivity is assumed. Recall that in the case of direct products as well as ordinal products of fuzzy posets, while the preservation of reflexivity and anti-symmetry are independent of the triangular norm one chooses in the construction of the product, this is not the case when it comes to the preservation of preference sensitive transitivity. Hence, this approach develops a method whereby one can perform various algebraic operations on fuzzy preference relations such that the essential property of preference sensitive transitivity is preserved under such constructions. Of course, additional hypotheses may need to be imposed on the underlying triangular norm to guarantee the preservation of additional fuzzy relational properties one may wish to impose of certain classes fuzzy preference relations.

\section*{Declaration of competing interest}
The author declares that he has no known competing financial interests or personal relationships that could have
appeared to influence the work reported in this paper.

\section*{Acknowledgments}
This research was funded under the CGS-D SSHRC grant no. 767-2022-1514. The author would like to thank Dr$.$\hspace{.05cm}Witold Pedrycz, the Fuzzy Set Theory Research Group in the Department of Electrical and Computer Engineering at the University of Alberta, as well as the participants of the 2023 BLAST conference (hosted by the Department of Mathematics and Statistics, University of North Carolina at Charlotte), and the 2024 UTEP-NMSU Workshop on Mathematics, Computer Science, and Computational Sciences (hosted by the Department of Computer Science, University of Texas at El Paso) for their useful comments and suggestions.


\begin{thebibliography}{1}
    \setlength{\itemsep}{0em}
    \setlength{\parskip}{0em}
\bibitem{ajm}Ajmal, N., Thomas, K.: Fuzzy lattices. \emph{Inf. Sci.} \textbf{79}, 271 -- 291 (1994)


\bibitem{bar} Barrett, C., Pattanaik, P., Salles, M.: On the structure of fuzzy social welfare functions. \emph{Fuzzy Sets Syst.} \textbf{19}, 1 -- 11 (1986)


\bibitem{cho2009} Chon, I.: Fuzzy partial order relations and fuzzy lattices. \emph{Korean J. Math.} \textbf{17}, 361--374 (2009)

\bibitem{das} Dasgupta, M., Deb, R.: Transitivity and fuzzy preferences. \emph{Soc. Choice Welf}. \textbf{13}, 305 -- 318 (1996)

\bibitem{dav} Davey, B., Priestley, H.: Introduction to Lattices and Order (second edition). Cambridge University Press, Cambridge (2002)



 



\bibitem{kle} Klement, E., Mesiar, R., Pap, E.: Triangular Norms. Trends in Logic, vol. \textbf{8}, Springer (2000) 



\bibitem{kur} Kuroki, N.: On fuzzy semigroups. \emph{Fuzzy Sets Syst.} \textbf{52}, 203 -- 236 (1991)

\bibitem{liu} Liu, W.J.: Fuzzy invariant subgroups and Fuzzy ideals. \emph{Fuzzy Sets Syst.} \textbf{8}, 133 -- 139 (1982)

\bibitem{mac} Mac Lane, S.: Categories for the Working Mathematician. Graduate Texts in Mathematics, vol. \textbf{5}, Springer-Verlag, New York (1971)

\bibitem{mez} Mezzomo, I., Bedregal, B., and Santiago, R.: On some operations on bounded fuzzy lattices. \emph{J. Fuzzy Math.} \textbf{22}, 853 -- 878 (2014)

\bibitem{ros} Rosenfeld, A.: Fuzzy groups. \emph{J. Math. Anal. Appl.} \textbf{35}, 5-12 -- 517 (1971)




\bibitem{yua} Yuan, B., Wu. W.: Fuzzy ideals on a distributive lattice. \emph{Fuzzy Sets Syst.} 
\textbf{35}, 231 -- 240 (1990) 



\bibitem{zad1} Zadeh, L.: Similarity relations and fuzzy orderings. \emph{Inf. Sci.}, \textbf{3}, 177 -- 200 (1971)
\end{thebibliography}
\end{document}